\theoremstyle{definition}
\theoremstyle{remark}
\theoremstyle{theorem}
\newtheorem{theorem}{Theorem}
\newtheorem*{principle1}{First Principle}
\newtheorem*{principle2}{Second Principle}
\newtheorem*{principle3}{Third Principle}
\newtheorem*{principle4}{Fourth Principle}
\theoremstyle{definition}
\newtheorem{remark}[theorem]{Remark}
\def\eps{\varepsilon}
\def\RR{\mathbb{R}}
\def\NN{\mathbb{N}}
\def\ovr{\overline}
\def\cM{\mathcal{M}}
\begin{document}

\title{Littlewood's fourth principle}
\markright{Littlewood's fourth principle}
\author{Rolando Magnanini\thanks{Dipartimento di Matematica e Informatica ``U.~Dini'',
Universit\` a di Firenze, viale Morgagni 67/A, 50134 Firenze, Italy 
({\tt magnanin@math.unifi.it}).} and Giorgio Poggesi\thanks{Dipartimento di Matematica e Informatica ``U.~Dini'',
Universit\` a di Firenze, viale Morgagni 67/A, 50134 Firenze, Italy
({\tt giorgio.poggesi@stud.unifi.it}).}}

\maketitle

\begin{abstract}
In Real Analysis, Littlewood's three principles are known as heuristics that help teach the essentials of measure theory and reveal the analogies between the concepts of topological space and continuos function on one side and those of measurable space and measurable function on the other one. They are based on important and rigorous statements, such as Lusin's and Egoroff-Severini's theorems, and have ingenious and elegant proofs. We shall comment on those theorems and show how their proofs 
can possibly be made simpler by introducing a \textit{fourth principle}. These alternative
proofs make even more manifest those analogies and show that Egoroff-Severini's theorem can be considered the natural generalization of the classical Dini's monotone convergence theorem. 
\end{abstract}

\section{Introduction.}

John Edenson Littlewood (9 June 1885 - 6 September 1977) was a British mathematician.
In 1944, he wrote an influential textbook, \textit{Lectures on the Theory of Functions} (\cite{Li}), in which he proposed three principles as guides for working in real analysis;  these are heuristics to help teach the essentials of measure theory, as Littlewood himself wrote in \cite{Li}:
\begin{quotation}
The extent of knowledge [of real analysis] required is nothing like so great as is sometimes supposed. There are three principles, roughly expressible in the following terms:
every (measurable) set is nearly a finite sum of intervals;
every function (of class $L^{\lambda}$) is nearly continuous;
every convergent sequence is nearly uniformly convergent.
Most of the results of the present section are fairly intuitive applications of these ideas, and the student armed with them should be equal to most occasions when real variable theory is called for. If one of the principles would be the obvious means to settle a problem if it were ``quite'' true, it is natural to ask if the ``nearly'' is near enough, and for a problem that is actually soluble it generally is.
\end{quotation}

To benefit our further discussion, we shall express Littlewood's principles 
and their rigorous statements in forms
that are slightly different from those originally stated.

The first principle descends directly from the very definition of (Lebesgue) measurability of a set.

\begin{principle1}
Every measurable set is nearly closed.
\end{principle1}

The second principle relates the measurability of a function to the more familiar property of continuity.

\begin{principle2}
Every measurable function is nearly continuous.
\end{principle2}

The third principle connects the pointwise convergence of a sequence of functions to
the standard concept of uniform convergence.

\begin{principle3}
Every sequence of measurable functions that converges pointwise almost everywhere is nearly 
uniformly convergent.
\end{principle3}

These principles are based on important theorems that give a rigorous meaning to the term ``nearly''.
We shall recall these in the next section along with their ingenious proofs that give a taste of the 
standard arguments used in Real Analysis. 
\par
In Section 3, we will discuss a {\it fourth principle} that associates the concept of finiteness
of a function to that of its boundedness.

\begin{principle4}
Every measurable function that is finite almost everywhere is nearly 
bounded.
\end{principle4}

In the mathematical literature (see \cite{DiB}, \cite{CD}, \cite{Li}, \cite{Ma}, \cite{Ro}, \cite{Ru}, \cite{Ta}), the proof of the second principle is based on the third; it can be easily seen that the fourth principle can be derived from the second.
\par
However, we shall see that the fourth principle can also be proved independently;
this fact makes possible a proof of the second principle \textit{without} appealing for the third,
that itself can be derived from the second, by a totally new proof based on \textit{Dini's monotone convergence theorem}.
\par
 As in \cite{Li}, to make our discussion as simple as possible, we shall consider the Lebesgue measure $m$ for the real line $\mathbb {R}$; then in Section 4  
we shall hint at how the four principles and their rigorous counterparts can be extended to more general contexts.

\section{The three principles}
We recall the definitions of  {\it inner} and {\it outer measure} of
a set $E\subseteq\RR$: they are respectively\footnote{The number $|K|$ is the infimum of the
total lengths of all the finite unions of open intervals that contain $K$. 
Accordingly, $|A|$ is the supremum of the
total lengths of all the finite unions of closed intervals contained in $A$.}
\begin{eqnarray*}
&&m_i(E)=\sup\{ |K|: K \mbox{ is compact and } K\subseteq E\},\\
&&m_e(E)=\inf\{|A| : A\mbox{ is open and } A\supseteq E\},
\end{eqnarray*}
It always holds that $m_i(E)\le m_e(E)$. The set $E$ is (Lebesgue) measurable if and only if 
$m_i(E)=m_e(E)$;
when this is the case, the {\it measure} of $E$ is $m(E)=m_i(E)=m_e(E)$; thus $m(E)\in[0,\infty]$ 
and it can be proved that $m$ is a measure on the $\sigma$-algebra of Lebesgue measurable subsets of $\RR$, as specified in Section \ref{sec:extensions}.
\par
By the properties of the supremum, it is easily seen that, for any pair of subsets $E$ and $F$ of $\RR$, $m_e(E\cup F)\le m_e(E)+m_e(F)$ and $m_e(E)\le m_e(F)$ if $E\subseteq F$.
\vskip.1cm
The first principle is a condition for the measurability of subsets of $\RR$. 

\begin{theorem}[First Principle]
\label{th:first}
Let $E\subset\RR$ be a set of finite outer measure. 
\par
Then, $E$ is measurable if and only if for every $\eps > 0$ there exist two sets $K$ and $F$, with 
$K$ closed (compact), $K \cup F = E$ and $m_e(F) < \eps$.
\end{theorem}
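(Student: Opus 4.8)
The plan is to prove the characterization by establishing each implication in terms of the inner and outer measure definitions given above, keeping in mind that $m_e(E)<\infty$ throughout.

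First I would prove the forward implication, assuming $E$ is measurable with $m(E)=m_i(E)=m_e(E)<\infty$. The strategy is to approximate $E$ from the inside by a compact set. Since $m_i(E)=\sup\{|K|:K\subseteq E,\ K\text{ compact}\}=m(E)$, for every $\eps>0$ there exists a compact set $K\subseteq E$ with $|K|>m(E)-\eps$, hence $m(E)-m(K)<\eps$. I would then set $F=E\setminus K$, so that trivially $K\cup F=E$ with $K$ compact. The point to verify is $m_e(F)<\eps$. Here I would use that $F=E\setminus K$ is itself measurable (as a difference of measurable sets, $K$ being compact and hence measurable) with $m(F)=m(E)-m(K)<\eps$; since $m_e(F)=m(F)$ for measurable $F$, this gives the desired bound. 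The only subtlety is justifying $m(E)=m(K)+m(F)$, which follows from finite additivity of $m$ on the disjoint measurable sets $K$ and $F$.

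Next I would prove the converse. Assume that for every $\eps>0$ we can write $E=K\cup F$ with $K$ compact and $m_e(F)<\eps$. I want to conclude $m_i(E)=m_e(E)$; since $m_i(E)\le m_e(E)$ always holds, it suffices to show $m_e(E)\le m_i(E)$. The key observation is that $K\subseteq E$ forces $m_i(E)\ge m_i(K)=|K|=m_e(K)$ (a compact set being measurable), so $m_i(E)\ge m_e(K)$. On the other hand, by subadditivity of outer measure applied to $E=K\cup F$, I get $m_e(E)\le m_e(K)+m_e(F)<m_e(K)+\eps\le m_i(E)+\eps$. Letting $\eps\to 0$ yields $m_e(E)\le m_i(E)$, hence equality and measurability.

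The main obstacle I anticipate is a pedagogical rather than a technical one: the cleanest argument above quietly invokes that compact sets are measurable and that $m$ is finitely additive and monotone on measurable sets, facts that sit just at the level of the definitions supplied in the excerpt. To keep the proof self-contained and faithful to the ``from the definitions'' spirit of the First Principle, I would phrase both directions as directly as possible in terms of $m_i$ and $m_e$ and the elementary properties $m_e(E\cup F)\le m_e(E)+m_e(F)$ and monotonicity stated just before the theorem, and take care that every set to which I apply additivity of measure has been checked to be measurable. The finiteness hypothesis $m_e(E)<\infty$ is essential in the forward direction to guarantee that $m(E)-m(K)$ is a well-defined finite quantity controlling $m_e(F)$.
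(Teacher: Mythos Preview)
Your proof is correct, and the converse direction is essentially identical to the paper's. The forward direction differs in one detail worth noting: you approximate only from the inside by a compact $K$ and then bound $m_e(F)$ via the measurability of $F=E\setminus K$ and finite additivity, obtaining $m_e(F)=m(E)-m(K)<\eps$. The paper instead uses \emph{both} an inner compact approximation $K$ and an outer open approximation $A$ with $m(K)>m(E)-\eps/2$ and $m(A)<m(E)+\eps/2$, and then bounds $m_e(F)\le m(A\setminus K)=m(A)-m(K)<\eps$ by monotonicity of outer measure under the open set $A\setminus K\supseteq F$. Your route is slightly more economical; the paper's route stays closer to the raw definitions of $m_i$ and $m_e$ and sidesteps the very issue you flag---having to invoke that $E\setminus K$ is measurable before $m$ has been fully developed as a measure---by working with an explicit open superset of $F$.
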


This is what is meant for \textit{nearly closed}.

\begin{proof}
If $E$ is measurable, for any $\eps>0$ we can find a compact set $K\subseteq E$ and an open
set $A\supseteq E$ such that
$$
m(K)>m(E)-\eps/2 \ \mbox{ and } \ m(A)<m(E)+\eps/2.
$$
The set $A\setminus K$ is open and contains $E\setminus K$. Thus, by setting $F=E\setminus K$, we have $E=K\cup F$ and
$$
m_e(F)\le m(A)-m(K)<\eps.
$$
\par
Viceversa, for every $\eps>0$ we have: 
$$
m_e(E)=m_e(K\cup F)\le m_e(K)+m_e(F)<m(K)+\eps\le m_i(E)+\eps.
$$
Since $\eps$ is arbitrary, then $m_e(E)\le m_i(E)$.
\end{proof}

The second and third principles concern measurable functions from (measurable) subsets of $\RR$
to the {\it extended real line} $\ovr{\RR}=\RR\cup\{+\infty\}\cup\{-\infty\}$,
that is functions are allowed to have values $+\infty$ and $-\infty$. 
\par
Let $f:E\to\ovr{\RR}$ be a function defined on a measurable subset $E$ of $\RR$. 
We say that $f$ is \textit{measurable} if the \textit{level sets} defined by
$$
L(f,t)=\{ x\in E: f(x)>t\}
$$
are measurable subsets of $\RR$ for every $t\in\RR$. It is easy to verify that if we replace $L(f,t)$ with $L^*(f,t)=\{ x\in E: f(x) \geq t\}$ we have an equivalent definition.  
\par
Since the countable union of measurable sets is measurable, it is not hard to show that
the pointwise infimum and supremum of a sequence of measurable functions $f_n:E\to\ovr{\RR}$ 
are measurable functions as well as the function defined for any $x\in E$ by
$$
\limsup_{n\to\infty} f_n(x)=\inf_{k\ge 1}\sup_{n\ge k} f_n(x).
$$
\par
Since the countable union of sets of measure zero has measure zero and the difference between $E$ and any set of measure zero is measurable, the same definitions and conclusions hold even
if the functions $f$ and $f_n$ are defined \textit{almost everywhere}\footnote{Denoted for short by \textit{a.e.}}, that is
if the subsets of $E$ in which they are not defined has measure zero.\footnote{In the same spirit,
we say that a function or a sequence of functions satisfies a given property \textit{a.e. \!\!in $E$}, if that property holds with the exception of a subset of measure zero.}
\par
As already mentioned, the third principle is needed to prove the second and is
known as \textit{Egoroff's theorem} or \textit{Egoroff-Severini's theorem}.\footnote{Dmitri Egoroff, a Russian physicist and geometer and Carlo Severini, an Italian mathematician, published independent proofs of this theorem respectively in 1910 and 1911
(see \cite{Eg} and \cite{Se}); Severini's assumptions are more restrictive. Severini's result is not very well-known, since it is hidden in a paper on orthogonal polynomials, published in Italian.}

\begin{theorem}[Third Principle; Egoroff-Severini]
\label{th:Egoroff}
Let $E\subset \RR$ be a measurable set with finite measure and
let $f:E\to\ovr{\RR}$ be measurable and finite a.e. in $E$.

The sequence of measurable functions $f_n:E\to\ovr{\RR}$  converges a.e. to $f$ in $E$ for $n\to\infty$ if and only if, for every $\eps> 0$, there exists a closed set $K \subseteq E$ such that $m(E \setminus K) < \eps$ and $f_n$ converges uniformly to $f$ on $K$.
\end{theorem}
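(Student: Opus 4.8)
The plan is to prove the two implications separately, the substantial one being that pointwise a.e. convergence forces uniform convergence off an arbitrarily small set (the assertion of Egoroff and Severini); the converse is a routine exhaustion argument. For the forward direction I would first discard a null set: since $f$ is finite a.e. and $f_n\to f$ a.e., there is a set $E'\subseteq E$ with $m(E\setminus E')=0$ on which $f$ is finite and $f_n(x)\to f(x)$ for every $x$. Since $E\setminus E'$ is negligible, I may as well assume that convergence and finiteness hold at every point of $E$. For integers $n,k\ge 1$ I would then introduce the measurable sets
$$
E_{n,k}=\bigcap_{j\ge n}\{\,x\in E:\ |f_j(x)-f(x)|<1/k\,\}.
$$
For fixed $k$ these increase with $n$ and, by pointwise convergence, their union is all of $E$; here the hypothesis $m(E)<\infty$ is essential, for it lets me invoke continuity of measure from below to conclude that $m(E\setminus E_{n,k})\to 0$ as $n\to\infty$.

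Given $\eps>0$, I would next choose for each $k$ an index $n_k$ with $m(E\setminus E_{n_k,k})<\eps/2^{k+1}$ and put $G=\bigcap_{k\ge 1}E_{n_k,k}$. Countable subadditivity of $m_e$ yields
$$
m_e(E\setminus G)\le\sum_{k\ge 1}m(E\setminus E_{n_k,k})<\eps/2,
$$
while by construction every $x\in G$ satisfies $|f_j(x)-f(x)|<1/k$ for all $j\ge n_k$, with $n_k$ independent of $x$; this is precisely the statement that $f_n\to f$ uniformly on $G$. To pass from the measurable set $G$ to a closed one I would appeal to the First Principle (Theorem \ref{th:first}): since $m(G)\le m(E)<\infty$, there is a closed set $K\subseteq G$ with $m_e(G\setminus K)<\eps/2$, whence $m(E\setminus K)<\eps$, and uniform convergence on $G$ trivially restricts to uniform convergence on the smaller set $K$. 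This settles the implication.

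For the converse I would assume the near-uniform convergence and run the exhaustion backwards. Taking $\eps=1/p$ for each $p\in\NN$ produces closed sets $K_p\subseteq E$ with $m(E\setminus K_p)<1/p$ on which $f_n\to f$ uniformly, hence pointwise. The set $Z=\bigcap_{p\ge 1}(E\setminus K_p)$ satisfies $m(Z)\le m(E\setminus K_p)<1/p$ for every $p$, so $m(Z)=0$, and every point of $E\setminus Z$ lies in some $K_p$, where convergence holds; therefore $f_n\to f$ a.e. in $E$.

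The only genuine obstacle lies in the forward direction, in the transition from the family of "for each $k$" estimates to a single good set $G$: the indices $n_k$ must be selected so that the measures $m(E\setminus E_{n_k,k})$ are summably small, and the convergent geometric series $\sum 2^{-(k+1)}$ is exactly the device that keeps the total measure removed below $\eps$. This step, together with the use of $m(E)<\infty$ through continuity from below, carries the whole weight of the theorem; everything else is bookkeeping with null sets and an application of the First Principle.
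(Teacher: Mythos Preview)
Your proof is correct and essentially identical to the paper's: the paper merely packages your sets $E_{n,k}$ via the auxiliary functions $g_n(x)=\sup_{j\ge n}|f_j(x)-f(x)|$ and writes $E_{n,m}=\{g_n<1/m\}$, but the structure---monotone convergence on the increasing $E_{n,k}$, a summable choice of indices $n_k$, intersection over $k$, then the First Principle to pass to a closed $K$, and the same exhaustion for the converse---matches step for step. One minor quibble: continuity from below needs no finiteness hypothesis; the assumption $m(E)<\infty$ is really used to convert $m(E_{n,k})\to m(E)$ into $m(E\setminus E_{n,k})\to 0$ (equivalently, to apply continuity from \emph{above} to the decreasing complements).
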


This is what we mean for \textit{nearly uniformly convergent}.

\begin{proof}
If $f_n \to f$ a.e. in $E$ as $n\to\infty$, the subset of $E$ in which $f_n \to f$ pointwise
has the same measure as $E$; hence, without loss of generality, we can assume that $f_n(x)$
converges to $f(x)$ for any $x\in E$.

Consider the functions defined by
\begin{equation}
\label{defgn}
g_n(x)=\sup_{k\ge n} |f_k(x)-f(x)|, \ \ x\in E
\end{equation}
and the sets
\begin{equation}
\label{defEnm}
E_{n,m} =\left\{x \in E : g_n(x) <\frac1{m} \right\}
\ \mbox{ for } \ n, m\in\NN.
\end{equation}
Observe that, if $x\in E$, then $g_n(x)\to 0$ as $n\to\infty$ and hence for any $m\in\NN$
$$
E=\bigcup_{n=1}^\infty E_{n,m}.
$$
As $E_{n,m}$ is increasing with $n$, the monotone convergence theorem implies that $m(E_{n,m})$ converges to $m(E)$ for $n\to\infty$  and for any $m \in\NN$.
Thus, for every $\eps>0$ and $m \in\NN$, there exists an index $\nu=\nu(\eps, m)$ such that
$
m(E \setminus E_{\nu,m}) < \eps/ {2^{m+1}}.
$

\par
The measure of the set $F = \bigcup\limits_{m = 1}^{\infty} (E \setminus E_{\nu,m})$ is arbitrary small, in fact
$$
m(F) \leq \sum_{m=1}^{\infty} m(E \setminus E_{\nu,m}) < \eps/ 2.
$$ 
Also, since $E\setminus F$ is measurable, by Thorem \ref{th:first}  there exists a compact set $K \subseteq E \setminus F$ such that $m(E \setminus F) - m(K) < \eps/ 2$, and hence 
$$
m(E \setminus K) = m(E\setminus F)+m(F) - m(K) < \eps.
$$
\par
Since $K \subseteq E \setminus F = \bigcap\limits_{m=1}^{\infty} E_{\nu(\eps, m), m}$ we have that 
$$
|f_n(x) - f(x)| < \frac{1}{m} \ \mbox{ for any } \ x\in K \ \mbox{ and } \ n \ge \nu(\eps, m),
$$ 
by the definitions of $E_{\nu,m}$ and $g_{n}$; this means that $f_n$ converges uniformly to $f$ on $K$ as $n\to\infty$.
\vskip.1cm
Viceversa, if for every $\eps> 0$ there is a closed set $K \subseteq E$ with
$m(E \setminus K) < \eps$ and $f_n \rightarrow f$ uniformly on $K$, then by
choosing $\eps=1/m$ we can say that there is a closed set $K_m \subseteq E$ such that $f_n \to f$ uniformly on $K_m$ and $m(E \setminus K_m) < 1 / m$. 
\par
Therefore, $f_n(x) \rightarrow f(x)$ for any $x$ in the set $F = \bigcup\limits_{m=1}^\infty K_m$ and 
$$
m(E \setminus F) = m\Bigl(\bigcap_{m=1}^\infty (E \setminus K_m)\Bigr) \leq m(E \setminus K_m) <\frac1{m} \ \mbox{ for any } \ m\in\NN,
$$ 
which implies that $m(E \setminus F) = 0$. Thus, $f_n\to f$ a.e. in $E$ as $n\to\infty$. 

\end{proof}

The second principle corresponds to \textit{Lusin's theorem} (see \cite{Lu}),\footnote{N. N. Lusin or Luzin was a student of Egoroff. For biographical notes on Egoroff and Lusin see \cite{GK}.} that we state
here in a form similar to Theorems \ref{th:first} and \ref{th:Egoroff}.

\begin{theorem}[Second Principle; Lusin]
\label{th:Lusin}
Let $E\subset \RR$ be a measurable set with finite measure and
let $f:E\to\ovr{\RR}$ be finite a.e. in $E$.
\par
Then, $f$ is measurable in $E$ if and only if, for every $\eps>0$, there exists a closed set $K \subseteq E$ such that $m(E \setminus K) < \eps$ and the restriction of $f$ to $K$ is continuous.
\end{theorem}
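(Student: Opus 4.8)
The plan is to prove both implications, using only the First Principle (Theorem~\ref{th:first}) and the Third Principle (Theorem~\ref{th:Egoroff}), both already available. The substantial direction is that a measurable $f$ is nearly continuous. First I would dispose of the infinite values: since $f$ is finite a.e., the set $Z=\{x\in E:|f(x)|=\infty\}$ is null, so the argument really concerns a finite-valued measurable function, and I would recall the standard fact that such an $f$ is the pointwise limit a.e. of a sequence of simple (finitely valued, measurable) functions $s_n$ on $E$.

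The engine of the proof is that each simple function is itself nearly continuous. Writing $s_n=\sum_i c_i\,\chi_{A_i}$ for a finite measurable partition $\{A_i\}$ of $E$, the First Principle lets me choose compact sets $K_i\subseteq A_i$ with $m(A_i\setminus K_i)$ arbitrarily small; on the closed set $\bigcup_i K_i$ the function $s_n$ is locally constant, hence continuous, because each point of a given $K_i$ has a neighbourhood meeting none of the finitely many remaining (closed, disjoint) pieces. Summing the errors to $\eps/2^{\,n+1}$ produces a closed set $H_n$ with $m(E\setminus H_n)<\eps/2^{\,n+1}$ on which $s_n$ is continuous. Now I would apply Egoroff--Severini: as $s_n\to f$ a.e. with $f$ finite a.e., there is a closed $H_0\subseteq E$ with $m(E\setminus H_0)<\eps/2$ on which $s_n\to f$ uniformly. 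Setting $K=\bigcap_{n\ge0}H_n$, which is closed, one gets $m(E\setminus K)\le m(E\setminus H_0)+\sum_{n\ge1}m(E\setminus H_n)<\eps$; on $K$ each $s_n$ is continuous and converges uniformly to $f$, so $f|_K$ is continuous as a uniform limit of continuous functions (the uniform convergence of the finite-valued $s_n$ also forcing $f$ to be finite on $K$).

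For the converse I would show that near-continuity forces measurability. Choosing $\eps=1/m$ gives closed sets $K_m$ with $m(E\setminus K_m)<1/m$ and $f|_{K_m}$ continuous; a continuous function on the closed measurable set $K_m$ is measurable there, since each level set $\{x\in K_m:f(x)>t\}$ is relatively open and thus of the form $U\cap K_m$ with $U$ open. Writing $G=\bigcup_m K_m$, the set $\{x\in G:f(x)>t\}=\bigcup_m\{x\in K_m:f(x)>t\}$ is measurable, while $m(E\setminus G)\le m(E\setminus K_m)<1/m$ for all $m$ forces $m(E\setminus G)=0$; by completeness of the Lebesgue measure the remaining part of each level set sits inside a null set and is measurable, so $f$ is measurable on all of $E$.

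I expect the main obstacle to lie in the forward direction, and specifically in the claim that the simple functions are \emph{continuous} (not merely measurable) on the approximating closed sets: the crux is that a function constant on each of finitely many disjoint closed sets is genuinely continuous, which rests on the mutual separation of those sets, together with the bookkeeping needed to keep the countably many exceptional sets summing below $\eps$ while the final intersection remains closed.
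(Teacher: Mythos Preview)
Your forward direction is essentially the paper's own argument: approximate $f$ by simple functions, show each simple function is nearly continuous by applying the First Principle to the finitely many pieces of its partition, invoke Egoroff--Severini to upgrade pointwise to uniform convergence on a large closed set, and intersect the countably many closed sets to obtain $K$. The only cosmetic difference is that the paper first reduces to non-negative $f$ via $f=f^{+}-f^{-}$ before citing Theorem~\ref{th:approximation}, whereas you discard the null set $\{|f|=\infty\}$ and appeal directly to the general approximation fact.

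For the converse you take a genuinely different route. The paper fixes a single $\eps>0$, writes
\[
L^{*}(f,t)=\{x\in K: f(x)\ge t\}\cup\{x\in E\setminus K: f(x)\ge t\},
\]
observes that the first set is closed and the second has outer measure below $\eps$, and concludes measurability of $L^{*}(f,t)$ \emph{via the First Principle} (Theorem~\ref{th:first}). You instead run $\eps=1/m$, build a full-measure subset $G=\bigcup_m K_m$ on which $f$ is visibly measurable, and invoke the \emph{completeness} of Lebesgue measure to absorb the null remainder $E\setminus G$. Both arguments are correct; the paper's stays inside the First/Second/Third-Principle framework and never mentions completeness, while yours trades that structural neatness for a more elementary exhaustion argument.
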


This is what we mean for \textit{nearly continuos}.
\vskip.1cm
The proof of Lusin's theorem is done by approximation by simple functions. A \textit{simple function}
is a measurable function that has a finite number of real values. 
If $c_1, \ldots, c_n$ are the \textit{distinct} values of a simple function $s$,
then $s$ can be conveniently represented as
$$
s = \sum\limits_{j = 1}^{n} c_{j}\mathcal{X}_{E_j},
$$
where $\mathcal{X}_{E_j}$ is the characteristic function of the set $E_j = \left\{ x \in E \textrm{ : } s(x) = c_j \right\}$. Notice that the $E_j$'s form a
covering of $E$ of pairwise disjoint measurable sets.
\par
Simple functions play 
a crucial role in Real Analysis; this is mainly due to the following result of which we shall
omit the proof.

\begin{theorem}[Approximation by Simple Functions]
\label{th:approximation}
Let $E\subseteq\RR$ be a measurable set and let $f: E\to [0,+\infty]$  be a measurable function.
\par
Then, there exists an increasing sequence of non-negative simple functions $s_n$ that 
converges pointwise to $f$ in $E$ for $n\to\infty$.
\par
Moreover, if $f$ is bounded, then $s_n$ converges to $f$ uniformly in $E$.
\end{theorem}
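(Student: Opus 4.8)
The plan is to construct the sequence explicitly by a dyadic truncation of the range of $f$. For each $n\in\NN$ I would split the interval $[0,n)$ into the $n2^n$ half-open pieces $[k/2^n,(k+1)/2^n)$, with $k=0,\dots,n2^n-1$, and collapse everything above $n$ to the single value $n$. Concretely, using the $L^*$ form of the level sets, I would set
$$
E_{n,k}=L^*(f,k/2^n)\setminus L^*(f,(k+1)/2^n),\qquad F_n=L^*(f,n),
$$
and then define
$$
s_n=\sum_{k=0}^{n2^n-1}\frac{k}{2^n}\,\mathcal{X}_{E_{n,k}}+n\,\mathcal{X}_{F_n}.
$$
Each $L^*(f,t)$ is measurable because $f$ is measurable (the definitions via $L$ and $L^*$ being equivalent, as recalled above), so each $E_{n,k}$ and each $F_n$ is measurable as a difference of measurable sets. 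Since $s_n$ takes only finitely many real nonnegative values on a covering of $E$ by pairwise disjoint measurable sets, it is a nonnegative simple function in the sense defined above.

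The step I expect to require the most care is the monotonicity $s_n\le s_{n+1}$, because passing from $n$ to $n+1$ changes the construction in two ways at once: the mesh is halved and the truncation level is raised. The clean way to handle this is to fix $x\in E$ and argue by cases on the value $f(x)$. If $f(x)<n$, then $x$ lies in exactly one $E_{n,k}$; the corresponding dyadic interval splits into two halves at level $n+1$, so $s_{n+1}(x)$ equals either $k/2^n$ or $(2k+1)/2^{n+1}$, and in both cases $s_{n+1}(x)\ge k/2^n=s_n(x)$. If instead $f(x)\ge n$, then $s_n(x)=n$, and a direct check (separating $n\le f(x)<n+1$ from $f(x)\ge n+1$) gives $s_{n+1}(x)\ge n=s_n(x)$. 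This yields the increasing property at every point.

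For the pointwise convergence I would again split on whether $f(x)$ is finite. If $f(x)<\infty$, then for every $n>f(x)$ the point $x$ falls in some $E_{n,k}$, and the construction forces $0\le f(x)-s_n(x)<1/2^n$, which tends to $0$ as $n\to\infty$; if $f(x)=+\infty$, then $x\in F_n$ for all $n$, so $s_n(x)=n\to+\infty=f(x)$. Finally, if $f$ is bounded, say $f\le M$ on $E$, then for every $n>M$ no point is truncated, so the estimate $0\le f(x)-s_n(x)<1/2^n$ holds simultaneously for all $x\in E$; since the bound $1/2^n$ is independent of $x$, this gives the uniform convergence claimed in the last assertion.
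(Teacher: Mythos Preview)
Your argument is correct: the dyadic truncation $s_n=\sum_{k=0}^{n2^n-1}(k/2^n)\,\mathcal{X}_{E_{n,k}}+n\,\mathcal{X}_{F_n}$ is the standard construction, and your case analysis for monotonicity, pointwise convergence, and the uniform estimate $0\le f-s_n<1/2^n$ when $f$ is bounded is complete and accurate. There is nothing to compare against, since the paper explicitly omits the proof of this theorem; your write-up simply supplies what the paper leaves out, and does so in the expected way.
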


We can now give the proof of Lusin's theorem.

\begin{proof}
Any measurable function $f$ can be decomposed as $f=f^+-f^-$, where $f^+=\max(f,0)$ and 
$f^-=\max(-f,0)$ are measurable and non-negative functions. Thus, we can always suppose that $f$
is non-negative and hence, by Theorem \ref{th:approximation}, it 
can be approximated pointwise by a sequence of simple functions.
\par
We first prove that a simple function $s$ is nearly continuos. 
Since the sets $E_j$ defining $s$ are measurable, if we fix $\eps> 0$ we can find closed subsets $K_j$ of $E_j$ such that $m(E_j \setminus K_j) < \eps/n$ for $j=1,\dots, n$.
The union $K$ of the sets $K_j$ is also a closed set and, since the $E_j$'s cover $E$, we have that $m(E \setminus K) < \eps$. Since the closed sets $K_j$ are pairwise disjoint (as the $E_j$'s are pairwise disjoint) and $s$ is constant on $K_j$ for all $j=1,\dots,n$, we conclude that $s$ is continuous in $K$.
\par
Now, if $f$ is measurable and non-negative, 
let $s_n$ be a sequence of simple functions that converges pointwise to $f$ and fix an $\eps>0$. 
\par
As the $s_n$'s are nearly continuous, for any natural number $n$, there exists a closed set $K_n \subseteq E$ such that $m(E \setminus K_n) < \eps/{2^{n+1}}$ and $s_n$ is continuous in $K_n$. By Theorem \ref{th:Egoroff}, there exists a closed set $K_0 \subseteq E$ such that $m(E \setminus K_0) < \eps/2$ and $s_n$ converges uniformly to $f$ in $K_0$ as $n\to\infty$. Thus, in the set
$$
K = \bigcap\limits_{n=0}^\infty K_n
$$ 
the functions $s_n$ are all continuous and converge uniformly to $f$. Therefore $f$ is continuous in $K$
and
$$
m(E \setminus K) = m\Bigl(\bigcup\limits_{n=0}^\infty (E \setminus K_n)\Bigr) \leq \sum_{n=0}^{\infty} m(E \setminus K_n) < \eps.
$$
\par
Viceversa, if $f$ is nearly continuous, fix an $\eps>0$ and let $K$ be a closed subset of $E$ such that $m(E \setminus K) < \eps$ and $f$ is continuous in $K$. For any $t \in \RR$, we have:
 $$
L^*(f,t)= \left\{x \in K : f(x) \geq t\right\} \cup \left\{x \in E \setminus K : f(x) \geq t\right\}.
$$ 
The former set in this decomposition is closed, as the restriction of $f$ to $K$ is continuous, 
while the latter is clearly a subset of $E \setminus K$ and hence its outer measure must be less than $\eps$. By Theorem \ref{th:first}, $L^*(f,t)$ is measurable (for any $t\in\RR$), which means that $f$
is measurable. 
\end{proof}

\section{The fourth principle}

We shall now present alternative proofs of Theorems \ref{th:Egoroff} and \ref{th:Lusin}.  
They are based on a fourth principle, that corresponds to the following theorem.

\begin{theorem}[Fourth Principle]
\label{th:fourth}
Let $E\subset \RR$ be a measurable set with finite measure and let $f:E\to\ovr{\RR}$ be a measurable function.
\par
Then, $f$ is finite a.e. in $E$ if and only if, for every $\eps > 0$, there exists a closed set $K \subseteq E$ such that $m(E \setminus K) < \eps$ and $f$ is bounded on $K$.
\end{theorem}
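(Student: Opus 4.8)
The plan is to prove the two implications separately, mirroring the structure already used for the first three principles. For the forward implication, the key idea is to exploit the finiteness of $m(E)$ together with continuity from below of the measure to show that the region where $|f|$ is large has arbitrarily small measure; the First Principle (Theorem~\ref{th:first}) then upgrades the resulting measurable set to a closed one. For the converse, I would run the now-familiar argument with $\eps = 1/m$ and take a countable union.

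For the direct implication, I would assume $f$ is finite a.e. and fix $\eps > 0$. The first step is to introduce the increasing sequence of measurable sets
\begin{equation*}
B_N = \{ x \in E : |f(x)| < N \}, \quad N \in \NN,
\end{equation*}
whose union is $\{ x \in E : |f(x)| < +\infty \}$. Since $f$ is finite a.e., this union has the same measure as $E$, and because $m(E) < \infty$ the monotone convergence theorem (continuity from below) gives $m(B_N) \to m(E)$, that is $m(E \setminus B_N) \to 0$. Choosing $N$ with $m(E \setminus B_N) < \eps/2$, the function $f$ is bounded by $N$ on $B_N$. Applying Theorem~\ref{th:first} to the measurable set $B_N$ (which has finite outer measure), I obtain a compact set $K \subseteq B_N$ with $m(B_N \setminus K) < \eps/2$. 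Then $f$ is bounded on $K$ and, since $E \setminus K$ is the disjoint union of $E \setminus B_N$ and $B_N \setminus K$,
\begin{equation*}
m(E \setminus K) = m(E \setminus B_N) + m(B_N \setminus K) < \eps.
\end{equation*}

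For the converse, I would assume that for every $\eps > 0$ there is a closed $K$ with $m(E \setminus K) < \eps$ on which $f$ is bounded. Taking $\eps = 1/m$ produces closed sets $K_m \subseteq E$ with $m(E \setminus K_m) < 1/m$ and $f$ bounded, hence finite, on each $K_m$. On $F = \bigcup_{m=1}^\infty K_m$ the function $f$ is therefore finite, and since $E \setminus F = \bigcap_{m=1}^\infty (E \setminus K_m)$ satisfies $m(E \setminus F) \le m(E \setminus K_m) < 1/m$ for every $m$, one gets $m(E \setminus F) = 0$. Thus $f$ is finite a.e. in $E$.

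The only delicate point --- and the place where the hypothesis $m(E) < \infty$ is genuinely used --- is the passage $m(B_N) \to m(E)$ in the forward direction: continuity from below requires the ambient set to have finite measure in order to read off $m(E \setminus B_N) \to 0$, and without it the conclusion would fail. Everything else is bookkeeping that splits $E \setminus K$ into a ``large-values'' part, controlled by the measure estimate, and a ``closed-approximation'' part, controlled by the First Principle.
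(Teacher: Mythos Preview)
Your proof is correct and follows essentially the same approach as the paper's: in the forward direction the paper works with the \emph{decreasing} sets $L(|f|,n)=\{|f|>n\}$ and downward monotone convergence, whereas you use the complementary \emph{increasing} sets $B_N=\{|f|<N\}$ and upward monotone convergence, but the content is the same---show that the ``large values'' set has small measure and then invoke Theorem~\ref{th:first} to pass to a closed subset. The converse is likewise the same argument, with your version explicitly forming $F=\bigcup_m K_m$ while the paper simply observes $\{|f|=\infty\}\subseteq E\setminus K_n$ for every~$n$.
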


This is what we mean for \textit{nearly bounded}.
\begin{proof}
If $f$ is finite a.e., we have that 
$$
m(\left\{x \in E : |f(x)| = \infty\right\})=0.
$$
As $f$ is measurable, $|f|$ is also measurable and so are the sets 
$$
L(|f|,n) = \left\{x \in E :|f(x)| > n\right\}, \ n\in\NN.
$$ 
Observe that the sequence of sets $L(|f|,n)$ is decreasing and
$$
\bigcap\limits_{n=1}^\infty L(|f|,n) = \left\{x \in E :|f(x)| = \infty\right\}.
$$ 
As $m(L(|f|,1)) \leq m(E) < \infty$, we can
apply the (downward) monotone convergence theorem and infer that
$$
\lim_{n \to \infty} m(L(|f|,n)) = m(\left\{x \in E :|f(x)| = \infty\right\}) = 0.
$$ 
\par
Thus, if we fix $\eps>0$,  there is an $n_{\eps} \in \mathbb N$ such that $m(L(|f|,n_{\eps})) < \frac{\eps}{2}$. Also, we can find a closed subset $K$ of the measurable set
$E \setminus L(|f|,n_{\eps})$ such that $m(E \setminus L(|f|,n_{\eps}))-m(K)< \frac{\eps}{2}$. Finally, since $K \subseteq E \setminus L(|f|,n_{\eps})$, $|f|$ is obviously bounded by $n_\eps$ on $K$
and
$$
m(E\setminus K)=m(E \setminus L(|f|,n_{\eps}))+m(L(|f|,n_{\eps})\setminus K)<\eps.
$$
\par
Viceversa, if $f$ is nearly bounded, then for any $n \in\NN$ there exists a closed set $K_n \subseteq E$ such that $m(E \setminus K_n) < 1/n$ and $f$ is bounded (and hence finite) in $K_n$.
Thus, $\left\{x \in E :|f(x)| =\infty\right\} \subseteq E \setminus K_n$ for any $n \in\NN$, and hence $$
m(\left\{x \in E:|f(x)| = \infty\right\}) \leq \lim_{n\to\infty} m(E \setminus K_n) =0,
$$ 
that is $f$ is finite a.e..
\end{proof}

\begin{remark}
Notice that this theorem can also be derived from Theorem \ref{th:Lusin}. 
In fact,
without loss of generality, the closed set $K$ provided by Theorem \ref{th:Lusin} can be 
taken to be compact and hence, $f$ is surely bounded on $K$, being continuous on a compact set.
\end{remark}

More importantly for our aims, Theorem \ref{th:fourth} enables us to prove Theorem \ref{th:Lusin} \textit{without} using Theorem \ref{th:Egoroff}.

\begin{proof}[Alternative proof of Lusin's theorem]
The proof runs similarly to that presented in Section 2. If $f$ is measurable, without loss of generality, we can assume that $f$ is non-negative and hence $f$ can be approximated pointwise by a sequence of simple functions $s_n$, which we know are nearly continuous. Thus, for any $\eps>0$, we can still construct the sequence of closed subsets $K_n$ of $E$ such that $m(E \setminus K_n) < \eps/{2^{n+1}}$ and $s_n$ is continuous in $K_n$. 
\par
Now, as $f$ is finite a.e., Theorem \ref{th:fourth} implies that it is nearly bounded, that is we can find a closed subset $K_0$ of $E$ in which $f$ is bounded and $m(E \setminus K_0) < \eps/ 2$. 
We apply the second part of the Theorem \ref{th:approximation} and infer that $s_n$ converges uniformly to $f$ in $K_0$. As seen before, we conclude that $f$ is continuous in the intersection $K$ of all the $K_n$'s,
because in $K$ it is the uniform limit of  the sequence of continuous functions $s_n$. As before
$m(E\setminus K)<\eps$.
\par
The reverse implication remains unchanged.
\end{proof}

In order to give our alternative proof of Theorem \ref{th:Egoroff}, we need to recall
a classical result for sequences of continuous functions.

\begin{theorem}[Dini]
\label{th:Dini}
Let $K$ be a compact subset of $\RR$ and let be given a sequence of continuous functions $f_n:K\to \RR$ that converges pointwise and monotonically in $K$ to a function $f:K\to\RR$.
\par
If $f$ is also continuous, then $f_n$ converges uniformly to $f$.
\end{theorem}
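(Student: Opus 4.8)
The plan is to exploit the compactness of $K$ through a nested open cover argument, reducing the statement to the behaviour of the monotone error functions. First I would reduce to a single monotonicity direction: by replacing $f_n$ with $-f_n$ and $f$ with $-f$ if necessary, I can assume that the sequence is non-increasing, so that $f_n(x)\ge f_{n+1}(x)\ge f(x)$ for every $x\in K$ and every $n$. This lets me introduce the error functions $g_n=f_n-f$, which are continuous (as differences of continuous functions), non-negative, non-increasing in $n$, and converge pointwise to $0$ on $K$.

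Next, fixing $\eps>0$, I would consider the sets $U_n=\{x\in K : g_n(x)<\eps\}$. Since $g_n$ is continuous, each $U_n$ is open in $K$; since the $g_n$ are non-increasing in $n$, the $U_n$ form an increasing sequence, $U_n\subseteq U_{n+1}$. The pointwise convergence $g_n(x)\to 0$ ensures that every $x\in K$ lies in some $U_n$, so $\{U_n\}_{n\in\NN}$ is an open cover of $K$. The decisive step is then compactness: a finite subcover exists, and because the $U_n$ are nested there is a single index $N$ with $K=U_N$. Consequently, for every $n\ge N$ one has $U_n\supseteq U_N=K$, i.e. $g_n(x)<\eps$ for all $x\in K$; this is precisely uniform convergence.

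The only point requiring care — and the one I expect to be the crux of a fully rigorous write-up — is verifying that nestedness lets the finite subcover collapse to a single set, which hinges entirely on the monotonicity hypothesis: without it the $U_n$ need not be nested and the argument breaks down. An alternative and equally short route avoids the cover language and argues by contradiction. Assuming convergence is not uniform, there would exist $\eps>0$ and points $x_{n_k}\in K$ with $g_{n_k}(x_{n_k})\ge\eps$; by compactness one extracts a subsequence $x_{n_k}\to x^\ast\in K$. For each fixed $m$, monotonicity gives $g_{n_k}\le g_m$ once $n_k\ge m$, so $\eps\le g_{n_k}(x_{n_k})\le g_m(x_{n_k})$, and letting $k\to\infty$ the continuity of the fixed function $g_m$ yields $g_m(x^\ast)\ge\eps$ for every $m$, contradicting $g_m(x^\ast)\to 0$. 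Either formulation delivers the result, and I would favour the open-cover version as the cleaner exhibition of where compactness and monotonicity each enter.
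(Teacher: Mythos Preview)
Your argument is correct and is essentially the same as the paper's: after reducing to one monotonicity direction you introduce the non-negative, monotone error functions, form the nested open sets where the error is below $\eps$, and use compactness to collapse the cover to a single index. The only cosmetic difference is that the paper reduces to the increasing case (with $h_n=f-f_n$) rather than the decreasing one, and it does not include your alternative contradiction sketch.
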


\begin{proof}
We shall prove the theorem when $f_n$ is monotonically increasing. 

For each $n \in \NN$, set $h_n = f - f_n$; as $n\to\infty$ the continuos functions $h_n$ decrease pointwise to $0$ on $K$.

Fix $\eps>0$. The sets $A_n = \left\{x \in K:h_n(x) < \eps\right\}$ are open, since the $h_n$'s are continuous; also,  $A_n \subseteq A_{n+1}$ for every $n\in\NN$, since the ${h_n}$'s decrease; finally,
the $A_n$'s cover $K$, since the $h_n$ converge pointwise to $0$. 
\par
By the compactness, $K$ is then covered by a finite number $m$ of the $A_n$'s, which means that $A_m = K$ for some $m\in\NN$. This implies that  $|f(x) - f_n(x)| < \eps$ for all $n \geq m$ and $x \in K$, as desired.
\end{proof}

\begin{remark}
\label{semicont}
The conclusion of Theorem \ref{th:Dini} still holds true if we assume that the sequence of $f_n$'s is increasing (respectively decreasing) and $f$ and all the $f_n$'s are lower (respectively upper) semicontinuous\footnote{We say that $f$ is lower (respectively upper) semicontinuous if the level sets

 $\{ x\in E: f(x)>t\}$ (respectively $\{ x\in E: f(x)<t\}$) are open for every $t \in \RR$.}.
\end{remark}

Now, Theorem \ref{th:Egoroff} can be proved by appealing for Theorems \ref{th:Lusin} and \ref{th:Dini}.

\begin{proof}[Alternative proof of Egoroff's theorem]
As in the classical proof of this theorem, we can always assume that $f_n(x) \to f(x)$ for every $x\in E$.
\par
Consider the functions and sets defined in \eqref{defgn} and \eqref{defEnm}, respectively.
We shall first show that there exists an $\nu\in\NN$ such that $g_n$ is nearly bounded for every
$n\ge \nu$. In fact, as already observed, since $g_n\to 0$ pointwise in $E$ as $n\to\infty$, we have that  
$$
E=\bigcup_{n=1}^\infty E_{n,1},
$$
and the $E_{n,1}$'s increase with $n$. Hence, if we fix $\eps>0$, there is a $\nu\in\NN$ such that $m(E\setminus E_\nu)<\eps/2$. Since
$E_\nu$ is measurable, by Theorem \ref{th:first} we can find a closed subset $K$ of $E_\nu$ such that
$m(E_\nu\setminus K)<\eps/2$.
\par
Therefore, $m(E\setminus K)<\eps$ and for every $n\ge \nu$
$$
0\le g_n(x)\le g_\nu(x)<1, \ \mbox{ for any } \ x\in K.
$$
\par
Now, being $g_n$ nearly bounded  in $E$ for every $n\ge \nu$, the alternative proof of Theorem \ref{th:Lusin} implies that $g_n$ is nearly continuous in $E$, that is for every $n\ge \nu$ there exists a closed
subset $K_n$ of $E$ such that $m(E\setminus K_n)<\eps/2^{n-\nu+1}$ and $g_n$ is continuous on $K_n$. 
The set 
$$
K=\bigcap_{n=\nu}^\infty K_n
$$
is closed, $m(E\setminus K)<\eps$ and on $K$ the functions $g_n$ are continuos for any $n\ge \nu$ 
and monotonically descrease to $0$ as $n\to\infty$.
\par
By Theorem \ref{th:Dini}, the $g_n$'s converge to $0$ uniformly on $K$.
This means that the $f_n$'s converge to $f$ uniformly on $K$ as $n\to\infty$. 
\par
The reverse implication remains unchanged.
\end{proof}

\begin{remark}
Egoroff's theorem can be considered, in a sense, as the \textit{natural} substitute of Dini's theorem, 
in case the monotonicity assumption is removed.
In fact, notice that the sequence of the $g_n$'s defined in \eqref{defgn} is decreasing; however, the $g_n$'s are in general no longer upper semicontinuous (they are only lower semicontinuous) and Dini's theorem (even in the form described in Remark \ref{semicont}) cannot be applied. In spite of that, the $g_n$'s \textit{remain} measurable if the $f_n$'s are so. 
\end{remark}

\section{Extensions.}
\label{sec:extensions}
Of course, all the proofs presented in Sections 2 and 3 work if we replace the real line $\RR$
by an Euclidean space of any dimension.
\vskip.1cm
Theorems \ref{th:Egoroff}, \ref{th:Lusin} and \ref{th:fourth}
can also be generalized replacing $\RR$ by a general measure space not necessarily endowed with a topology. 
\par
We recall that a measure space is a triple $(X,\cM,\mu)$. Here, $X$ is any set; $\cM$ is
a \textit{$\sigma$-algebra} in $X$, that is $\cM$ is a collection of subsets of $X$ that contains $X$ itself, the complement in $X$ of any set $E\in\cM$, and any countable union of sets $E_n\in\cM$
(the elements of $\cM$ are called {\it measurable sets});
$\mu$ is a function from $\cM$ to $[0,\infty]$ which is \textit{countably additive}, that is such that
\begin{equation*}
\label{additive}
\mu\Bigl(\bigcup_{n=1}^\infty E_n\Bigr)=\sum_{n=1}^\infty \mu(E_n),
\end{equation*}
for any sequence of pairwise disjoint sets $E_n\in\cM$.
\par
It descends from the definition that a measure $\mu$ is {\it monotone}, that is $\mu(E)\le\mu(F)$
if $E, F\in\cM$ and $E\subseteq F$. Another crucial property of a measure is the {\it monotone convergence theorem}: the sequence $\mu(E_n)$ converges to  
\begin{eqnarray*}
&&\mu\Bigl(\bigcup_{n=1}^\infty E_n\Bigr) \ \mbox{ if $E_n\subseteq E_{n+1}$ for any $n\in\NN$, or to} \\
&&\mu\Bigl(\bigcap_{n=1}^\infty E_n\Bigr) \ \mbox{ if $E_n\supseteq E_{n+1}$ for any $n\in\NN$
and $\mu(E_1)<\infty$.}
\end{eqnarray*}

In this general environment, Theorems \ref{th:Egoroff}, \ref{th:Lusin} and \ref{th:fourth}  can be extended simply by replacing closed sets by measurable sets; the proofs run similarly. 

\begin{theorem}
\label{thirdgeneralized}
Let ($X$,$\mathcal{M}$,$\mu$) be a measure space with $\mu(X)<\infty$.
\begin{itemize}
\item[(i)] {\rm (Egoroff-Severini)} A sequence of measurable functions $f_n:X\to\ovr{\RR}$  converges a.e. in $X$ to a measurable and finite a.e. function $f:X\to\ovr{\RR}$ if and only if, for every $\eps > 0$, there exists a measurable subset $E $ of $X$ such that $\mu(X \setminus E) < \eps$ and $f_n$ converges uniformly to $f$ on $E$.
\item[(ii)] {\rm (Lusin)} A finite a.e. function $f:X\to\ovr{\RR}$ is measurable in $X$ if and only if, for every $\eps > 0$, there exists a measurable subset $E$ of $X$ such that $\mu(X \setminus E) < \eps$ and the restriction of $f$ to $E$ is continuous.
\item[(iii)] {\rm (Fourth Principle)} A measurable function $f:X\to\ovr{\RR}$ is finite a.e. if and only if, for every $\eps > 0$, there exists a measurable subset $E$ of $X$ such that $\mu(X \setminus E) < \eps$ and $f$ is bounded on $E$.
\end{itemize}
\end{theorem}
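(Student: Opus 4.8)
The plan is to transcribe the proofs of Sections 2 and 3 into the abstract setting, after one unifying observation: throughout those proofs the topology of $\RR$ entered the measure-theoretic reasoning in exactly one way, namely through the First Principle (Theorem \ref{th:first}), whose sole role was to replace a measurable set $S$ by a closed subset $K\subseteq S$ with $\mu(S\setminus K)$ as small as desired. Once ``closed'' is replaced by ``measurable'' in the statements, this step is vacuous --- one simply keeps $K=S$. Everything else that is used is purely measure-theoretic: the upward and downward monotone convergence theorems recalled above, the stability of measurability under countable $\sup$, $\inf$ and $\limsup$ (a direct consequence of the $\sigma$-algebra axioms, hence valid in any $(X,\mathcal{M},\mu)$), and the dyadic-truncation construction underlying Theorem \ref{th:approximation}. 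So I would first establish (iii) and (i) directly, and then deduce (ii).

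For (iii) I would repeat the proof of Theorem \ref{th:fourth} verbatim: $|f|$ is measurable, the sets $L(|f|,n)$ decrease to $\{\,|f|=\infty\,\}$, and since $\mu(L(|f|,1))\le\mu(X)<\infty$ the downward monotone convergence theorem yields $\mu(L(|f|,n))\to 0$; given $\eps$, one picks $n_\eps$ and takes the (already measurable) set $E=X\setminus L(|f|,n_\eps)$, on which $|f|\le n_\eps$. For (i) the key remark is that it is the \emph{classical} proof of Theorem \ref{th:Egoroff}, not the alternative one, that generalizes: the alternative proof rests on Dini's theorem (Theorem \ref{th:Dini}), whose argument uses compactness, a notion unavailable in a bare measure space. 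I would therefore form $g_n$ and $E_{n,m}$ as in the classical proof and, for each $m$, apply the upward monotone convergence theorem to the increasing sets $E_{n,m}$ to choose $\nu(\eps,m)$ with $\mu(X\setminus E_{\nu,m})<\eps/2^{m+1}$; the measurable set $E=\bigcap_{m}E_{\nu(\eps,m),m}$ then has $\mu(X\setminus E)<\eps$ and carries the uniform convergence. In both items the converse implications use only subadditivity and monotone convergence and pass through unchanged.

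With (iii) available I would prove (ii) by the alternative route (the classical route via (i) works equally well): decompose $f=f^{+}-f^{-}$, approximate each part by simple functions $s_n$ (Theorem \ref{th:approximation}), use (iii) to produce a set $E_0$ on which $f$ is bounded --- so that $s_n\to f$ uniformly there by the second part of Theorem \ref{th:approximation} --- and intersect $E_0$ with the sets on which the $s_n$ are continuous, concluding that $f$ is continuous on the (measurable) intersection.

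This last point is where I expect the real difficulty, and it is specific to (ii). Unlike (i) and (iii), statement (ii) speaks of \emph{continuity}, a purely topological notion; in a measure space carrying no topology the phrase ``the restriction of $f$ to $E$ is continuous'' has no meaning, so the genuine task is to isolate the minimal extra structure that keeps the continuity arguments alive. Inspecting the Euclidean proof, the forward direction must shrink each piece $E_j$ of a finite measurable partition to a closed subset of nearly full measure, the pieces being pairwise disjoint closed sets and hence mutually separated, so that a finitely-valued function is continuous on their union --- that is, it needs inner regularity of $\mu$ with respect to closed sets, the true replacement of the First Principle. The converse needs two further facts: that a continuous function on a measurable set has measurable level sets (topological compatibility of $\mathcal{M}$) and that subsets of null sets are measurable (completeness of $\mu$). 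Granting a topological measure space that is inner regular and complete, the Euclidean proof transcribes line by line; these regularity and completeness hypotheses are exactly the price of the continuity clause, whereas (i) and (iii) require nothing beyond $\mu(X)<\infty$.
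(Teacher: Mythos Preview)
Your approach is exactly what the paper does: its entire ``proof'' is the sentence that Theorems \ref{th:Egoroff}, \ref{th:Lusin} and \ref{th:fourth} ``can be extended simply by replacing closed sets by measurable sets; the proofs run similarly.'' Your observation that the First-Principle step becomes vacuous (take $K=S$ itself) is precisely the content of that remark, and your transcriptions of (iii) and (i) are just ``proofs run similarly'' written out in full. Your further point---that it is the \emph{classical} proof of Egoroff--Severini, not the alternative Dini-based one, that survives the passage to a bare measure space, since Theorem \ref{th:Dini} needs compactness---is correct and sharper than anything the paper makes explicit.

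Where you part company with the paper is on (ii), and here you are more careful than the paper itself. As stated, (ii) speaks of continuity of $f|_E$ while $(X,\mathcal M,\mu)$ carries no topology, so the clause has no intrinsic meaning; the paper glosses over this and even asserts that ``stated in this form'' the second principle ``eludes the necessity of a First Principle.'' Your diagnosis is the right one: the forward direction of the Euclidean argument needs the disjoint pieces $E_j$ of a simple function to be shrunk to pairwise-separated closed sets (inner regularity, i.e.\ a genuine First Principle), and the converse needs closed sets to lie in $\mathcal M$ together with some completeness-type hypothesis so that the ``small'' remainder is measurable. The paper in effect concedes all this in the paragraph following the theorem, where it reinstates a metric and a Carath\'eodory outer measure before invoking the First Principle again. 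So the ``real difficulty'' you anticipate is not a gap in your argument but in the theorem's formulation; the hypotheses you propose are the natural repair.
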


Stated in this forms, the second, third and fourth principles elude the necessity of a First Principle that, of course, needs the presence of a topological space $(X,\tau)$ and the definition of a suitable outer measure on $X$. 
\par
We recall that on any set $X$ an outer measure $\mu_e$ can be defined as
a function on the power set $\mathcal{P}(X)$ with values in $[0,+\infty]$, which is monotone,
countably subadditive and such that $\mu(\varnothing)=0$. Carath\'eodory's extension theorem (see \cite{Ta}) then states that
one can always find a $\sigma$-algebra $\cM$ in $X$ (the $\sigma$-algebra of the so-called $\mu_e$-measurable sets) on which $\mu_e$ is actually a measure 
(that is $\mu_e$ is countably additive). Also, Carath\'eodory's criterion (see \cite{EG}) states that, if $\mu_e$ is a {\it Carath\'eodory measure}\footnote{That is, 
$\mu_e(E\cup F)=\mu_e(E)+\mu_e(F)$ for any choice of sets $E$ and $F$ such that $d(E,F)>0$.} on a metric space  $(X, d)$, then the $\sigma$-algebra of $\mu_e$-measurable
sets contains that of the Borel sets\footnote{That is the smallest $\sigma$-algebra that contains the topology in $(X, d)$.} 
and hence all the compact sets. 
\par
Whenever a First Principle is valid for a metric space $(X,d)$, the statements (classical and alternative) and proofs of Theorems \ref{th:Egoroff}, \ref{th:Lusin} and \ref{th:fourth} simply hold by replacing $\RR$ by $X$ and $m$ by $\mu_e$.

\vfill\eject

\end{document}